\theoremstyle{theorem}
\newtheorem{theorem}{Theorem}[section]
\newtheorem{proposition}[theorem]{Proposition}
\theoremstyle{definition}
\newtheorem{definition}[theorem]{Definition}
\theoremstyle{remark}
\newtheorem{remark}[theorem]{Remark}
\newcommand{\Aut}[1]{\mathrm{Aut}(#1)}
\newcommand{\Inn}[1]{\mathrm{Inn}(#1)}
\newcommand{\Hom}[1]{\mathrm{Hom}(#1)}
\begin{document}

\title{Quasi-triviality of quandles for link-homotopy}
\author[Ayumu Inoue]{Ayumu Inoue${}^{\dagger}$}
\thanks{${}^{\dagger}$The author is partially supported by Grant-in-Aid for Research Activity Start-up, No.\ 23840014, Japan Society for the Promotion of Science.}
\address{Department of Mathematical and Computing Sciences, Tokyo Institute of Technology, Ookayama, Meguro--ku, Tokyo, 152--8552 Japan}
\email{ayumu.inoue@math.titech.ac.jp}

\subjclass[2010]{Primary 57M27; Secondary 57M99}
\keywords{link, link-homotopy, quandle, quandle cocycle invariant}

\begin{abstract}
We introduce the notion of quasi-triviality of quandles and define homology of quasi-trivial quandles.
Quandle cocycle invariants are invariant under link-homotopy if they are associated with 2-cocycles of quasi-trivial quandles.
We thus obtain a lot of numerical link-homotopy invariants.
\end{abstract}

\maketitle

\section{Introduction}
\label{sec:introduction}

Link-homotopy, introduced by Milnor \cite{Mil1954}, gives rise to an equivalence relation on oriented and ordered links in a 3-sphere.
More precisely, two links are said to be link-homotopic if they are related to each other by a finite sequence of ambient isotopies and self-crossing changes, keeping the orientation and ordering.
Here, a self-crossing change is a homotopy for a single component of a link, supported in a small ball whose intersection with the component consists of two segments, depicted in Figure \ref{fig:self-crossing_change}.
The classification problem of links up to link-homotopy is already solved by Habegger and Lin \cite{HL1990} completely.
They gave an algorithm which determines whether given links are link-homotopic or not.
On the other hand, a table consisting of representatives of all link-homotopy classes is still not known other than partial ones given by Milnor \cite{Mil1954, Mil1957} for links with 3 or fewer components and by Levine \cite{Lev1988} for links with 4 components.
We note that both of them utilized numerical invariants to obtain the tables.
\begin{figure}[htbp]
\begin{center}
\includegraphics[scale=0.2]{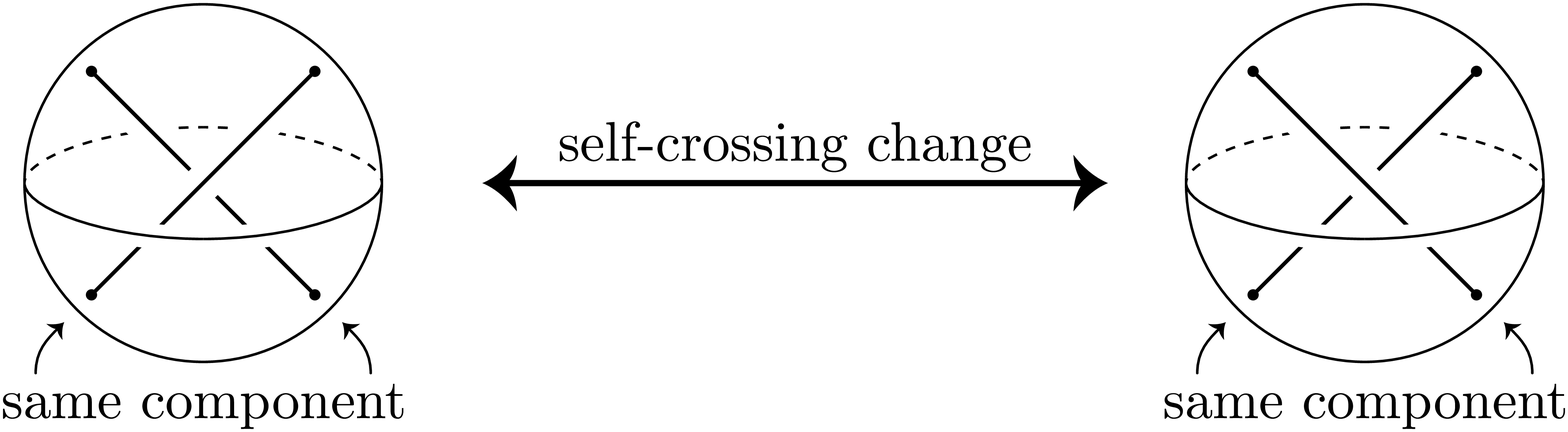}
\end{center}
\caption{}
\label{fig:self-crossing_change}
\end{figure}

A quandle, given by Joyce \cite{Joy1982}, is an algebraic system consisting of a set together with a binary operation whose definition is strongly motivated in knot theory.
Carter et al.\ \cite{CJKLS2003} introduced quandle homology and gave invariants of links up to ambient isotopy using 2-cocycles of quandles.
These invariants, called quandle cocycle invariants, are not invariant under link-homotopy in general.
However, in this paper, we introduce the notion of quasi-triviality of quandles and show that quandle cocycle invariants associated with 2-cocycles of quasi-trivial quandles are invariant under link-homotopy (Theorem \ref{thm:main}), modifying the definition of quandle homology slightly.
We thus have a lot of numerical link-homotopy invariants, which might enable us to obtain a table consisting of representatives of all link-homotopy classes.
At the end of this paper, as an application of Theorem \ref{thm:main}, we show a famous fact that the Borromean rings, which is a 3-component link, is not trivial up to link-homotopy.

Throughout this paper, every link is assumed to be oriented, ordered and in a 3-sphere.
We also assume that every link diagram is oriented and ordered.

\section{Preliminaries}
\label{sec:preliminaries}

We devote this section to reviewing a quandle cocycle invariant.
Recall that two links are ambient isotopic if and only if their diagrams are related to each other by a finite sequence of Reidemeister moves.

We first review the definition of a quandle.
A \emph{quandle} is a non-empty set $X$ equipped with a binary operation $\ast : X \times X \rightarrow X$ satisfying the following axioms:
\begin{itemize}
\item[(Q1)]
For each $x \in X$, $x \ast x = x$.
\item[(Q2)]
For each $x \in X$, a map $\ast \> x : X \rightarrow X$ ($\bullet \mapsto \bullet \ast x$) is bijective.
\item[(Q3)]
For each $x, y, z \in X$, $(x \ast y) \ast z = (x \ast z) \ast (y \ast z)$.
\end{itemize}
The notion of homomorphisms of quandles is appropriately defined.
The axioms (Q1), (Q2) and (Q3) of a quandle are closely related to the Reidemeister moves RI, RII and RIII respectively as follows.

An \emph{arc coloring} of a link diagram $D$ by a quandle $X$ is a map $\{ \textrm{arcs of $D$} \} \rightarrow X$ satisfying the condition depicted in Figure \ref{fig:condition_of_arc_coloring} at each crossing.
We call an element of a quandle assigned to an arc by an arc coloring a \emph{color} of the arc.
Suppose $D$ is a link diagram and $D^{\prime}$ a diagram obtained from $D$ by a Reidemeister move.
Then, for each arc coloring $\mathscr{A}$ of $D$, we have a unique arc coloring of $D^{\prime}$ whose restriction to arcs unrelated to the deformation coincides with the restriction of $\mathscr{A}$.
Indeed, the axioms (Q1), (Q2) and (Q3) of a quandle guarantee that we can perform RI, RII and RIII moves fixing colors of ends respectively (See Figure \ref{fig:correspondence_of_arc_colorings}).
Therefore, for a fixed quandle, the number of all arc colorings is invariant under ambient isotopy.
\begin{figure}[htbp]
\begin{center}
\includegraphics[scale=0.2]{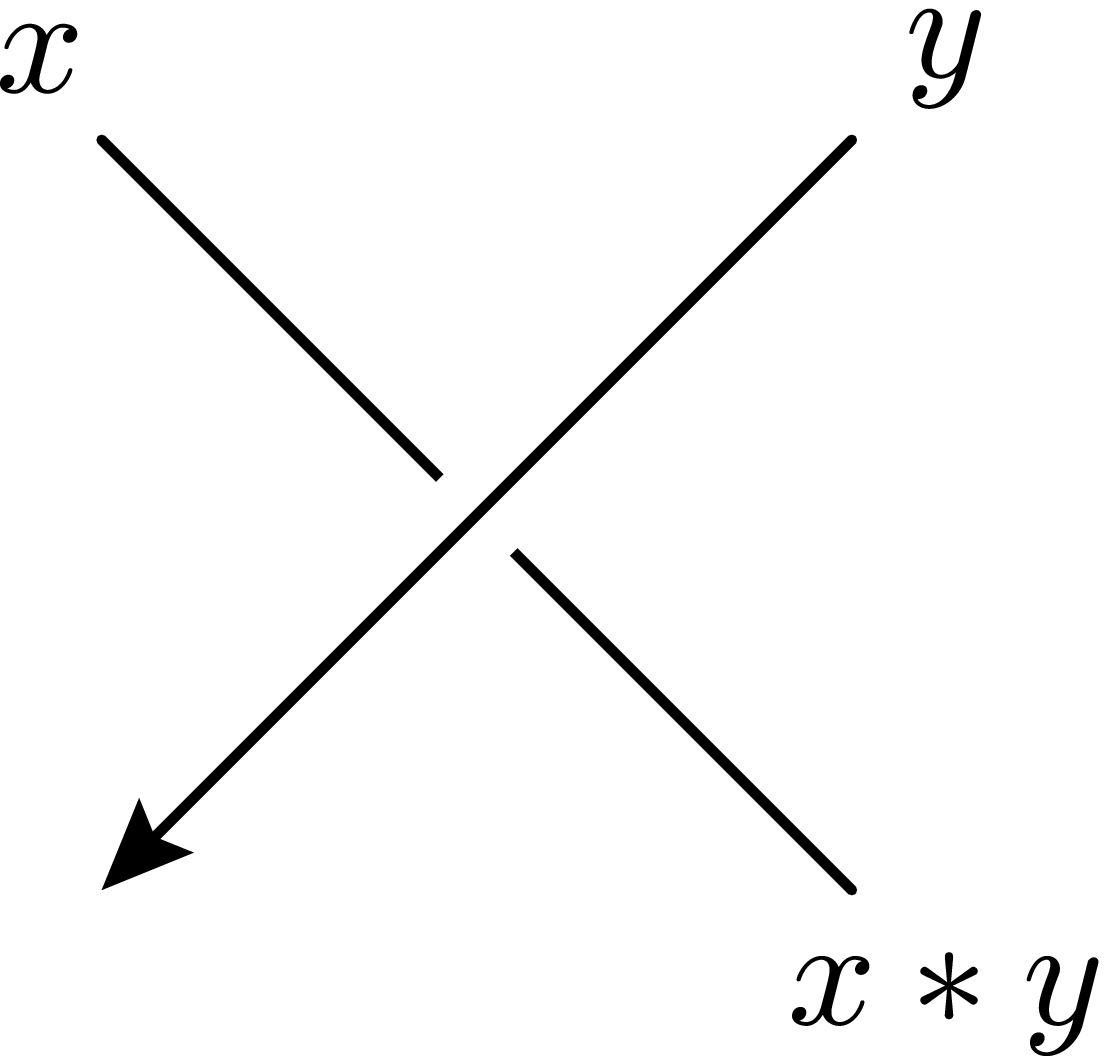}
\end{center}
\caption{}
\label{fig:condition_of_arc_coloring}
\end{figure}
\begin{figure}[htbp]
\begin{center}
\includegraphics[scale=0.2]{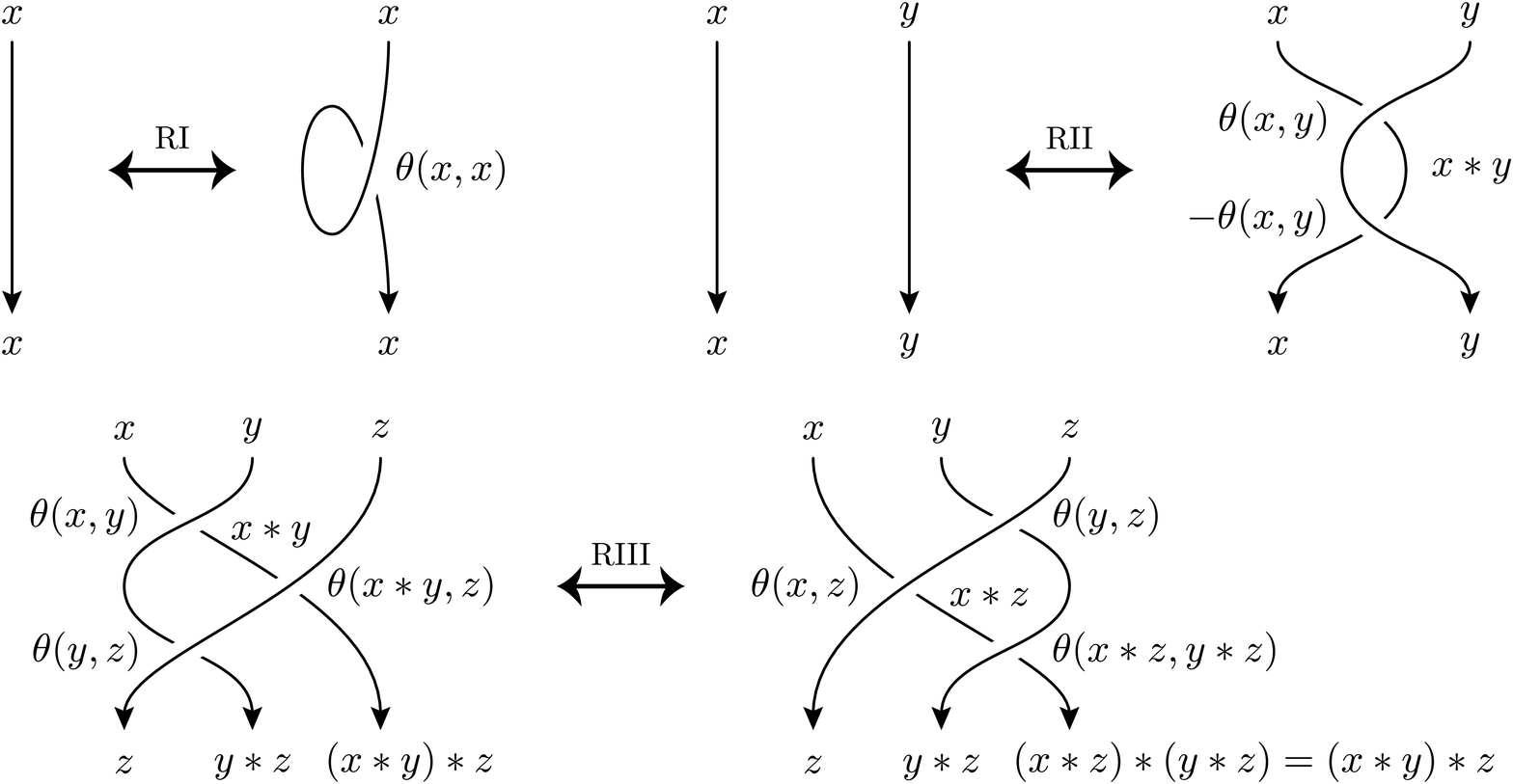}
\end{center}
\caption{}
\label{fig:correspondence_of_arc_colorings}
\end{figure}

We next review quandle homology.
Let $X$ be a quandle.
Consider the free abelian group $C^{R}_{n}(X)$ generated by all $n$-tuples $(x_{1}, x_{2}, \dots, x_{n}) \in X^{n}$ for each $n \geq 1$.
We let $C^{R}_{0}(X) = \mathbb{Z}$.
Define a map $\partial_{n} : C^{R}_{n} \rightarrow C^{R}_{n - 1}$ by
\begin{eqnarray*}
& & \partial_{n}(x_{1}, x_{2}, \dots, x_{n}) = \sum_{i = 2}^{n} (-1)^{i} \{ (x_{1}, \dots, x_{i-1}, x_{i+1}, \dots, x_{n}) \\
& & \hskip 15em - \, (x_{1} \ast x_{i}, \dots, x_{i-1} \ast x_{i}, x_{i+1}, \dots, x_{n}) \}
\end{eqnarray*}
for $n \geq 2$, and $\partial_{1} = 0$.
Then $\partial_{n-1} \circ \partial_{n} = 0$.
Thus $(C^{R}_{n}(X), \partial_{n})$ is a chain complex.
Let $C^{D}_{n}(X)$ be a subgroup of $C^{R}_{n}(X)$ generated by $n$-tuples $(x_{1}, x_{2}, \dots, x_{n}) \in X^{n}$ with $x_{i} = x_{i+1}$ for some $i$ if $n \geq 2$, and let $C^{D}_{n}(X) = 0$ otherwise.
It is routine to check that $\partial_{n}(C^{D}_{n}(X)) \subset C^{D}_{n-1}(X)$.
Therefore, putting $C^{Q}_{n}(X) = C^{R}_{n}(X) / C^{D}_{n}(X)$, we have a chain complex $(C^{Q}_{n}(X), \partial_{n})$.
Let $A$ be an abelian group.
The $n$-th \emph{quandle homology group} $H^{Q}_{n}(X; A)$ with coefficients in $A$ is the $n$-th homology group of the chain complex $(C^{Q}_{n}(X) \otimes A, \partial_{n} \otimes \textrm{id})$.
The $n$-th \emph{quandle cohomology group} $H_{Q}^{n}(X; A)$ with coefficients in $A$ is the $n$-th cohomology group of the cochain complex $(\Hom{C^{Q}_{n}(X), A}, \Hom{\partial_{n}, \textrm{id}})$.
We note that $\theta \in \Hom{C^{Q}_{2}(X), A}$ is a 2-cocycle if and only if $\theta$ satisfies the following conditions:
\begin{itemize}
\item[(C1)]
For each $x \in X$, $\theta(x, x) = 0$.
\item[(C2)]
For each $x, y, z \in X$, $\theta(\partial(x, y, z)) = 0$, \\
that is, $\theta(x, y) + \theta(x \ast y, z) = \theta(x, z) + \theta(x \ast z, y \ast z)$.
\end{itemize}

Associated with a 2-cocycle of a quandle, we can define an weight of an arc coloring as follows.
Suppose $X$ is a quandle and $\theta \in \Hom{C^{Q}_{2}(X), A}$ a 2-cocycle with coefficients in an abelian group $A$.
The $i$-th \emph{weight} of an arc coloring $\mathscr{A}$ of a link diagram $D$ by $X$ associated with $\theta$ is a value
\[
W(\mathscr{A}, \theta; i) = \sum \varepsilon \cdot \theta(x, y) \ \in A,
\]
where the sum runs over the crossings of $D$, each of which consists of under arcs belonging to the $i$-th component and an over arc, $\varepsilon$ is $1$ or $-1$ depending on whether the crossing is positive or negative respectively, and $x, y \in X$ denote colors of arcs around the crossing as depicted in Figure \ref{fig:condition_of_arc_coloring}.
We have the following theorem.

\begin{theorem}[Carter et al.\ \cite{CJKLS2003}]
\label{thm:CJKLS2003}
Let $X$ be a quandle and $\theta \in \Hom{C^{Q}_{2}(X), A}$ a 2-cocycle with coefficients in an abelian group $A$.
Then an weight of an arc coloring by $X$ associated with $\theta$ is invariant under Reidemeister moves.
\end{theorem}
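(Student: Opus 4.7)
My plan is to prove invariance locally, move by move. Since $W(\mathscr{A}, \theta; i)$ is a sum of contributions at the crossings of $D$ and a Reidemeister move alters only the crossings inside a small disk $B$, it suffices to show that, for each oriented Reidemeister move, the sum of contributions from the crossings in $B$ agrees before and after the move, for every component index $i$. The correspondence of arc colorings described just after Figure \ref{fig:correspondence_of_arc_colorings} will let me identify the colorings on the two sides of each move, so there is no ambiguity about which colors appear in the weight.

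For an RI move I would observe that the only affected crossing is a self-crossing of some component $j$, and axiom (Q1) forces both strands there to carry a common color $x \in X$; hence the local contribution is $\pm \theta(x, x)$, which is absent from $W(\mathscr{A}, \theta; i)$ when $j \neq i$ and vanishes by (C1) when $j = i$. For an RII move, two crossings are affected; they share a single over-arc of color $y$, and their under-arcs lie on a common component $j$. Using (Q2) to invert the map $\ast y$, I would check that the two contributions have opposite signs but identical $\theta$-values, so they cancel irrespective of $j$.

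The bulk of the work lies in the RIII move. Labelling the incoming colors of the lowest, middle and highest strands by $x, y, z \in X$ and computing the colors at all six crossings together with their signs, I expect the difference of the two sums of contributions to reduce, up to a global sign, to
\[
\theta(x, y) + \theta(x \ast y, z) - \theta(x, z) - \theta(x \ast z, y \ast z),
\]
which is exactly $\theta(\partial(x, y, z))$ and thus vanishes by (C2). The remaining $\pm \theta(y, z)$ contributions at the third crossing (the one between the middle and highest strands) will cancel because that crossing survives the move with the same colors and the same sign.

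The principal technical obstacle will be the orientation bookkeeping, particularly for RIII, which has several orientation variants; each variant will need to be reduced to the basic form above by means of (Q2) and the sign convention illustrated in Figure \ref{fig:condition_of_arc_coloring}. Since a Reidemeister move does not alter which component each arc belongs to, component-by-component invariance of $W(\mathscr{A}, \theta; i)$ for every $i$ will follow automatically from the local cancellations described above.
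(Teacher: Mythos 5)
Your proposal is correct and follows essentially the same route as the paper's proof: a local, move-by-move analysis in which RI contributes $\pm\theta(x,x)$ (killed by (C1)), RII contributes $\theta(x,y)-\theta(x,y)=0$, and RIII changes the $i$-th weight by $\pm\theta(\partial(x,y,z))$ (killed by (C2)), with the unaffected crossing of the RIII configuration contributing identically on both sides. The only difference is that you flag the orientation variants of RIII explicitly, a bookkeeping point the paper leaves implicit.
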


\begin{proof}
Let $\mathscr{A}$ be an arc coloring of a link diagram by $X$.
Consider to perform a Reidemeister move for this colored diagram.
An RI move for a segment of the $i$-th component only adds or subtracts $\theta(x, x)$ to or from $W(\mathscr{A}, \theta; i)$ with some $x \in X$ (See the upper left of Figure \ref{fig:correspondence_of_arc_colorings}).
It does not change $W(\mathscr{A}, \theta; i)$ because $\theta$ satisfies the condition (C1).
An RII move, by which a segment of the $i$-th component passes under a some arc, only adds or subtracts $\theta(x, y) - \theta(x, y) = 0$ to or from $W(\mathscr{A}, \theta; i)$ with some $x, y \in X$ (See the upper right of Figure \ref{fig:correspondence_of_arc_colorings}).
An RIII move adds $\pm \theta(\partial(x, y, z))$ to $W(\mathscr{A}, \theta; i)$ with some $x, y, z \in X$ if the innermost arcs related to the deformation belong to the $i$-th component (See the bottom of Figure \ref{fig:correspondence_of_arc_colorings}).
Since $\theta$ satisfies the condition (C2), it does not change $W(\mathscr{A}, \theta; i)$.
\end{proof}

Suppose again that $X$ is a quandle and $\theta \in \Hom{C^{Q}_{2}(X), A}$ a 2-cocycle with coefficients in an abelian group $A$.
Theorem \ref{thm:CJKLS2003} says that, for each link $L$ and index $i$, the multiset $\Phi(L, \theta; i)$ consisting of $i$-th weights of all arc coloring of a diagram of $L$ by $X$ associated with $\theta$ does not depend on the choice of the diagram.
Thus $\Phi(L, \theta; i)$ is invariant under ambient isotopy.
The $i$-th \emph{quandle cocycle invariant} of $L$ associated with $\theta$ is this multiset $\Phi(L, \theta; i)$.

\section{Quasi-trivial quandle and link-homotopy}
\label{sec:quasi-trivial_quandle_and_link-homotopy}

Although the number of all arc colorings is invariant under ambient isotopy, it is not invariant under self-crossing changes in general.
Indeed, the trefoil knot is deformed into the unknot by a self-crossing change, but they are distinguished by the numbers of all arc colorings.
Thus a quandle cocycle invariant is not invariant under link-homotopy in general.
On the other hand, in this section, we show that a self-crossing change on a diagram also relates arc colorings of the original and deformed diagrams uniquely if we use a certain quandle, named a quasi-trivial quandle, for arc colorings.
Since two links are link-homotopic if and only if their diagrams are related to each other by a finite sequence of Reidemeister moves and self-crossing changes on diagrams, the number of all arc colorings by a quasi-trivial quandle is invariant under link-homotopy.
Furthermore, modifying the definition of quandle homology slightly, we will have a quandle cocycle invariant which is invariant under link-homotopy.

To define a quasi-trivial quandle, we first review the automorphism group and inner automorphism group of a quandle.
The \emph{automorphism group} of a quandle $X$ is a group $\Aut{X}$ consisting of all automorphisms of $X$ together with a product given by the composition of maps.
The axioms (Q2) and (Q3) of a quandle guarantee that, for each $x \in X$, the bijection $\ast \> x : X \rightarrow X$ is an automorphism of $X$.
The \emph{inner automorphism group} $\Inn{X}$ is the subgroup of $\Aut{X}$ generated by all automorphisms $\ast \> x : X \rightarrow X$.
We call an element of $\Inn{X}$ an \emph{inner automorphism} of $X$.

\begin{definition}
A quandle $X$ is said to be \emph{quasi-trivial} if $x \ast \varphi(x) = x$ for each $x \in X$ and $\varphi \in \Inn{X}$.
\end{definition}

As mentioned above, we have the following proposition.

\begin{proposition}
\label{prop:invariant_under_self-crossing_change_1}
Let $X$ be a quasi-trivial quandle.
Suppose $D$ is a link diagram and $D^{\prime}$ a diagram obtained from $D$ by a self-crossing change.
Then, for each arc coloring $\mathscr{A}$ of $D$ by $X$, we have a unique arc coloring of $D^{\prime}$ whose restriction to arcs unrelated to the deformation coincides with the restriction of $\mathscr{A}$.
\end{proposition}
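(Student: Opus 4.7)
The plan is to exhibit the unique candidate arc coloring of $D'$ that extends $\mathscr{A}$ on the unaffected arcs, and then verify the arc-coloring axiom at the single switched crossing, with quasi-triviality supplying the two identities required.

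First I would set up local notation at the self-crossing $c$ being changed: in $D$, let $\alpha$ (color $y$) be the over-arc at $c$, let $\beta$ (color $x$) be the incoming under-arc, and let $\gamma$ (color $x \ast y$) be the outgoing under-arc. All other arcs of $D$ persist unchanged in $D'$, while in $D'$ the arc $\alpha$ is cut at $c$ into two new arcs $\alpha_{1}, \alpha_{2}$, and the arcs $\beta, \gamma$ are fused through $c$ into a single new arc $\delta$.

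The main ingredient is the observation that any two colors assigned by $\mathscr{A}$ to arcs on the same component lie in a common $\Inn{X}$-orbit: traversing the component along its orientation, each undercrossing alters the current color by a map of the form $\ast b$, which by definition is an element of $\Inn{X}$. Since $\alpha$ and $\beta$ both lie on the $i$-th component, I can therefore write $y = \varphi(x)$ for some $\varphi \in \Inn{X}$. Quasi-triviality then gives $x \ast y = x \ast \varphi(x) = x$, so $\beta$ and $\gamma$ already share the color $x$ in $\mathscr{A}$. Consequently the only coloring of $D'$ compatible with $\mathscr{A}$ on the unaffected arcs must assign color $x$ to $\delta$ and color $y$ to each of $\alpha_{1}, \alpha_{2}$; these colors are forced at the outer endpoints of the new arcs by the unchanged crossings located there, so the candidate coloring is uniquely determined.

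The only remaining verification is that the arc-coloring condition is satisfied at $c$ in $D'$, where $\delta$ (color $x$) is the over-arc and $\alpha_{1}$, $\alpha_{2}$ (both colored $y$) are the incoming and outgoing under-arcs; this amounts to the identity $y \ast x = y$. Writing $x = \varphi^{-1}(y)$, which is legal because $\Inn{X}$ is a group, a second application of quasi-triviality gives $y \ast x = y \ast \varphi^{-1}(y) = y$. The main obstacle is precisely this need to invoke quasi-triviality twice in opposite directions: closure of $\Inn{X}$ under inverses, which is built into the definition, is exactly what makes the single axiom yield both consistency identities and so pins down the definition of \emph{quasi-trivial} rather than some weaker variant.
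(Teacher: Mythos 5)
Your proof is correct and takes essentially the same approach as the paper: both express the over-arc's color as $y=\varphi(x)$ for an inner automorphism $\varphi$ obtained by traversing the component, use quasi-triviality to get $x\ast y=x$ (so the two under-arcs fuse consistently), and then verify the switched crossing via $y\ast x=y\ast\varphi^{-1}(y)=y$. The only difference is that you spell out the $\varphi^{-1}$ step that the paper leaves as the brief remark ``$\varphi(x)\ast x=\varphi(x)$''.
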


\begin{proof}
Assume that the self-crossing change is performed at a crossing $c$.
Let $x, y, x \ast y \in X$ denote colors of arcs around $c$ by $\mathscr{A}$ as depicted in Figure \ref{fig:condition_of_arc_coloring}.
Since under arcs and an over arc around $c$ belong to the same component, we have
\[
y = (\ast \, z_{n})^{\varepsilon_{n}} \circ \dots \circ (\ast \, z_{2})^{\varepsilon_{2}} \circ (\ast \, z_{1})^{\varepsilon_{1}} (x)
\]
with some $n \in \mathbb{Z}_{\geq 0}$, $z_{i} \in X$ and $\varepsilon_{i} \in \{ \pm 1 \}$ (See the left-hand side of Figure \ref{fig:invariance_under_self-crossing_change}).
Let $\varphi \in \Inn{X}$ denote the inner automorphism $(\ast \, z_{n})^{\varepsilon_{n}} \circ \dots \circ (\ast \, z_{2})^{\varepsilon_{2}} \circ (\ast \, z_{1})^{\varepsilon_{1}}$.
By the assumption that $X$ is quasi-trivial, $x \ast y = x \ast \varphi(x) = x$.
Thus, remarking that $\varphi(x) \ast x = \varphi(x)$, we have a unique arc coloring $\mathscr{A}^{\prime}$ of $D^{\prime}$ whose restriction to arcs unrelated to the deformation coincides with the restriction of $\mathscr{A}$ (See the right-hand side of Figure \ref{fig:invariance_under_self-crossing_change}).
\begin{figure}
\begin{center}
\includegraphics[scale=0.18]{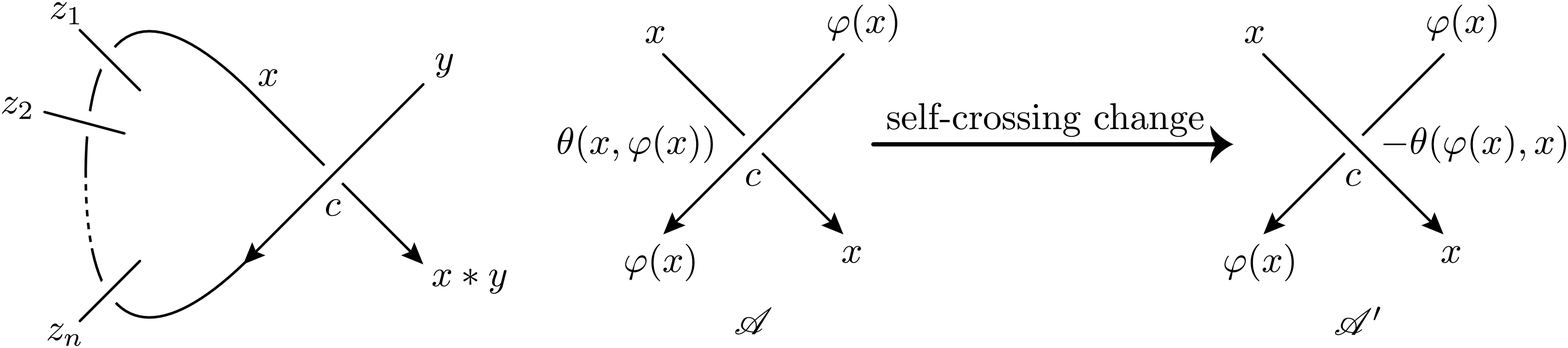}
\end{center}
\caption{}
\label{fig:invariance_under_self-crossing_change}
\end{figure}
\end{proof}

\begin{remark}
For a link $L$, we have the \emph{knot quandle} $Q(L)$, defined by Joyce \cite{Joy1982}, which is invariant under ambient isotopy (The same notion is given by Matveev \cite{Mat1982}).
An arc coloring of a diagram of $L$ by a quandle $X$ is just a diagrammatic presentation of a homomorphism $Q(L) \rightarrow X$.

We further have the \emph{reduced knot quandle} $RQ(L)$ of $L$, given by Hughes \cite{Hug2011}, which is the quasi-trivialization of $Q(L)$.
Suppose $X$ is a quasi-trivial quandle.
Then each homomorphism $Q(L) \rightarrow X$ factors through a homomorphism $RQ(L) \rightarrow X$.
Conversely, each homomorphism $RQ(L) \rightarrow X$ uniquely lifts to a homomorphism $Q(L) \rightarrow X$.
Thus an arc coloring of a diagram of $L$ by $X$ is just a diagrammatic presentation of a homomorphism $RQ(L) \rightarrow X$.
Hughes showed that $RQ(L)$ is invariant under link-homotopy.
It gives an alternative proof for Proposition \ref{prop:invariant_under_self-crossing_change_1}.
\end{remark}

Proposition \ref{prop:invariant_under_self-crossing_change_1} says that, for a fixed quasi-trivial quandle, the number of all arc colorings is invariant under link-homotopy.
However, weights of arc colorings are not invariant under self-crossing changes in general, even if we use quasi-trivial quandles for arc colorings.
We thus have to modify the definition of quandle homology slightly, as follows, so that weights are invariant under self-crossing changes.

Let $X$ be a quasi-trivial quandle.
Define the free abelian groups $C^{R}_{n}(X)$ and boundary maps $\partial_{n} : C^{R}_{n}(X) \rightarrow C^{R}_{n-1}(X)$ as in Section \ref{sec:preliminaries}.
Let $C^{D, qt}_{n}(X)$ be a subgroup of $C^{R}_{n}(X)$ generated by $n$-tuples $(x_{1}, x_{2}, \dots, x_{n}) \in X^{n}$ with $x_{i} = x_{i+1}$ for some $i$ and $n$-tuples $(x_{1}, \varphi(x_{1}), x_{3}, \dots, x_{n}) \in X^{n}$ for some $\varphi \in \Inn{X}$ if $n \geq 2$.
Let $C^{D, qt}_{n}(X) = 0$ if $n = 0, 1$.
It is easy to see that $\partial_{n}(C^{D, qt}_{n}(X)) \subset C^{D, qt}_{n-1}(X)$.
Thus, putting $C^{Q, qt}_{n}(X) = C^{R}_{n}(X) / C^{D, qt}_{n}(X)$, we have a chain complex $(C^{Q, qt}_{n}(X), \partial_{n})$.

Suppose that $A$ is an abelian group.
The $n$-th \emph{quasi-trivial quandle homology group} $H^{Q, qt}_{n}(X; A)$ with coefficients in $A$ is the $n$-th homology group of the chain complex $(C^{Q, qt}_{n}(X) \otimes A, \partial_{n} \otimes \textrm{id})$.
The $n$-th \emph{quasi-trivial quandle cohomology group} $H_{Q, qt}^{n}(X; A)$ with coefficients in $A$ is the $n$-th cohomology group of the cochain complex $(\Hom{C^{Q, qt}_{n}(X), A}, \Hom{\partial_{n}, \textrm{id}})$.
We note that $\theta \in \Hom{C^{Q, qt}_{2}(X), A}$ is a 2-cocycle if and only if $\theta$ satisfies the conditions (C1), (C2) in Section \ref{sec:preliminaries} and the following condition:
\begin{itemize}
\item[(C3)]
For each $x \in X$ and $\varphi \in \Inn{X}$, $\theta(x, \varphi(x)) = 0$.
\end{itemize}

\begin{proposition}
\label{prop:invariant_under_self-crossing_change_2}
Let $X$ be a quasi-trivial quandle and $\theta \in \Hom{C^{Q, qt}_{2}(X), A}$ a 2-cocycle with coefficients in an abelian group $A$.
Then an weight of an arc coloring by $X$ associated with $\theta$ is invariant under self-crossing changes on diagrams.
\end{proposition}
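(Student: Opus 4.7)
The plan is to reduce the proposition to a purely local computation at the self-crossing $c$ being changed, and then invoke condition (C3) twice.

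First, I would reuse the analysis from the proof of Proposition \ref{prop:invariant_under_self-crossing_change_1}: around $c$, the over-arc color $y$ equals $\varphi(x)$ for some $\varphi \in \Inn{X}$, where $x$ is the incoming under-arc color. Quasi-triviality forces both under-arcs at $c$ to carry the same color $x$. After the crossing change, the strand formerly over passes under at $c$, and the identity $\varphi(x) \ast x = \varphi(x)$ (which follows by applying quasi-triviality with $\varphi^{-1} \in \Inn{X}$, since $\Inn{X}$ is a group) shows that its two new sub-arcs both receive color $\varphi(x)$, while the strand formerly under becomes a single over-arc of color $x$. Hence the new coloring $\mathscr{A}'$ differs from $\mathscr{A}$ only on the arcs immediately adjacent to $c$, and every other crossing keeps both its colors and its sign.

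The weight $W(\cdot, \theta; i)$ therefore changes only through the contribution of $c$. Writing $\varepsilon \in \{\pm 1\}$ for the sign of $c$ before the change, that contribution is $\varepsilon\,\theta(x, \varphi(x))$ before and $-\varepsilon\,\theta(\varphi(x), x)$ after. Condition (C3) kills the first term directly, and kills the second after rewriting $x = \varphi^{-1}(\varphi(x))$ with $\varphi^{-1} \in \Inn{X}$. For any component index $j \neq i$, the crossing $c$ contributes to neither weight, so $W(\mathscr{A}, \theta; j) = W(\mathscr{A}', \theta; j)$ trivially.

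The main obstacle is not algebraic but combinatorial: one must track carefully how the under/over roles at $c$ swap, and confirm that no further modifications to the coloring propagate along the diagram. Once the local picture around $c$ is secured, both contributions collapse to zero by (C3) -- which is precisely the reason (C3) was imposed when defining $C^{D, qt}_{n}(X)$.
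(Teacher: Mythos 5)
Your proposal is correct and follows essentially the same route as the paper: localize the change of weight to the single crossing $c$, observe that the difference is $\pm\bigl(\theta(x,\varphi(x)) + \theta(\varphi(x),x)\bigr)$, and kill both terms with (C3) (the second via $x = \varphi^{-1}(\varphi(x))$ with $\varphi^{-1} \in \Inn{X}$). Your write-up simply makes explicit the sign bookkeeping and the non-propagation of colors that the paper delegates to its figure.
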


\begin{proof}
Let $\mathscr{A}$ be an arc coloring of a link diagram by $X$.
A self-crossing change at a crossing of the diagram subtracts $\pm (\theta(x, \varphi(x)) + \theta(\varphi(x), x))$ from $W(\mathscr{A}, \theta; i)$ with some $x \in X$ and $\varphi \in \Inn{X}$ if the crossing consists of arcs belonging to the $i$-th component (See the right-hand side of Figure \ref{fig:invariance_under_self-crossing_change}).
Since $\theta$ satisfies the condition (C3), it does not change $W(\mathscr{A}, \theta; i)$.
\end{proof}

By Theorem \ref{thm:CJKLS2003} and Proposition \ref{prop:invariant_under_self-crossing_change_2}, we have the following theorem immediately.

\begin{theorem}
\label{thm:main}
Let $X$ be a quasi-trivial quandle and $\theta \in \Hom{C^{Q, qt}_{2}(X), A}$ be a 2-cocycle with coefficients in an abelian group $A$.
Then, for each link $L$ and index $i$, the $i$-th quandle cocycle invariant $\Phi(L, \theta; i)$ is invariant under link-homotopy.
\end{theorem}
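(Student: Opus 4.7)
The plan is to combine the two preceding results with the standard diagrammatic characterization of link-homotopy, which is recalled in Section \ref{sec:quasi-trivial_quandle_and_link-homotopy}: two links are link-homotopic if and only if their diagrams are related by a finite sequence of Reidemeister moves and self-crossing changes on diagrams. Hence it is enough to show that the multiset $\Phi(L,\theta;i)$ of $i$-th weights is unchanged under each of these two types of moves, and then iterate along such a sequence.

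For Reidemeister moves, I would first note that since $C^{D}_{n}(X) \subset C^{D,qt}_{n}(X)$, the natural surjection $C^{Q}_{2}(X) \twoheadrightarrow C^{Q,qt}_{2}(X)$ lets us view $\theta$ as a 2-cocycle of the ordinary quandle cohomology (conditions (C1) and (C2) are verbatim the same). Then the bijective correspondence of arc colorings across Reidemeister moves reviewed in Section \ref{sec:preliminaries} sends each arc coloring $\mathscr{A}$ of a diagram $D$ to a unique arc coloring $\mathscr{A}'$ of the new diagram $D'$, and Theorem \ref{thm:CJKLS2003} guarantees $W(\mathscr{A},\theta;i) = W(\mathscr{A}',\theta;i)$. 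Hence the multiset $\Phi(L,\theta;i)$ is preserved.

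For self-crossing changes, Proposition \ref{prop:invariant_under_self-crossing_change_1} provides, for each arc coloring of $D$, a unique arc coloring of the deformed diagram agreeing on the arcs away from the move; this sets up a bijection on the index set of the multiset. Proposition \ref{prop:invariant_under_self-crossing_change_2} then says that the $i$-th weight is preserved under this bijection. Therefore $\Phi(L,\theta;i)$ is again unchanged.

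Once these two invariances are established, the theorem follows by chaining them along a link-homotopy written as a sequence of Reidemeister moves and self-crossing changes. I do not expect any genuine obstacle: given the machinery assembled in Section \ref{sec:quasi-trivial_quandle_and_link-homotopy}, the argument is essentially bookkeeping. The only point worth flagging is the compatibility of the two cohomology theories noted in the second paragraph, which legitimizes invoking Theorem \ref{thm:CJKLS2003} directly for a 2-cocycle originally defined on $C^{Q,qt}_{2}(X)$.
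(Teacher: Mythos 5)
Your proposal is correct and follows essentially the same route as the paper, which deduces the theorem immediately from Theorem \ref{thm:CJKLS2003} and Propositions \ref{prop:invariant_under_self-crossing_change_1} and \ref{prop:invariant_under_self-crossing_change_2} via the diagrammatic characterization of link-homotopy. Your extra remark that the surjection $C^{Q}_{2}(X) \twoheadrightarrow C^{Q, qt}_{2}(X)$ lets one view $\theta$ as an ordinary quandle 2-cocycle is a worthwhile point of care that the paper leaves implicit.
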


\begin{remark}
A \emph{trivial quandle} is a non-empty finite set $X$ equipped with a binary operation $\ast$ satisfying $x \ast y = x$ for each $x, y \in X$.
Carter et al.\ \cite{CJKLS2003} showed that, for each 2-cocycle $\theta \in \Hom{C^{Q}_{2}(X), A}$ with coefficients in an abelian group $A$ and index $i$, the $i$-th quandle cocycle invariant $\Phi(L, \theta; i)$ is completely determined by the linking numbers of $L$.
Since the linking numbers are invariant under link-homotopy, $\Phi(L, \theta; i)$ is also invariant under link-homotopy.
It is easy to see that each trivial quandle is quasi-trivial and its 2-cocycle satisfies not only the conditions (C1) and (C2) but the condition (C3).
\end{remark}

\section{Example}
\label{sec:example}

In this final section, as an application of Theorem \ref{thm:main}, we show a famous fact that the Borromean rings is not trivial up to link-homotopy.

Let $X$ be a set consisting of twelve elements $a_{i}$, $b_{i}$ and $c_{i}$ ($1 \leq i \leq 4$).
Define a binary operation $\ast$ on $X$ by Table \ref{tbl:binary_operation} whose $(i+1, j+1)$-entry denotes $x \ast y$ with $x$ being the $(i+1, 1)$-entry and $y$ the $(1, j+1)$-entry.
Then $X$ with $\ast$ is in fact a quasi-trivial quandle.
Furthermore, define an element $\theta \in \Hom{C^{Q, qt}_{2}(X), \mathbb{Z}_{2}}$ by Table \ref{tbl:2-cocycle} whose $(i+1, j+1)$-entry denotes $\theta(x, y)$ with $x$ being the $(i+1, 1)$-entry and $y$ the $(1, j+1)$-entry.
It is routine to check that $\theta$ is a 2-cocycle.
\begin{table}[htbp]
\begin{tabular}{|c||c|c|c|c|c|c|c|c|c|c|c|c|} \hline
$\ast$ & $a_{1}$ & $a_{2}$ & $a_{3}$ & $a_{4}$ & $b_{1}$ & $b_{2}$ & $b_{3}$ & $b_{4}$ & $c_{1}$ & $c_{2}$ & $c_{3}$ & $c_{4}$ \\ \hline \hline
$a_{1}$ & $a_{1}$ & $a_{1}$ & $a_{1}$ & $a_{1}$ & $a_{2}$ & $a_{2}$ & $a_{2}$ & $a_{2}$ & $a_{3}$ & $a_{3}$ & $a_{3}$ & $a_{3}$ \\ \hline
$a_{2}$ & $a_{2}$ & $a_{2}$ & $a_{2}$ & $a_{2}$ & $a_{1}$ & $a_{1}$ & $a_{1}$ & $a_{1}$ & $a_{4}$ & $a_{4}$ & $a_{4}$ & $a_{4}$ \\ \hline
$a_{3}$ & $a_{3}$ & $a_{3}$ & $a_{3}$ & $a_{3}$ & $a_{4}$ & $a_{4}$ & $a_{4}$ & $a_{4}$ & $a_{1}$ & $a_{1}$ & $a_{1}$ & $a_{1}$ \\ \hline
$a_{4}$ & $a_{4}$ & $a_{4}$ & $a_{4}$ & $a_{4}$ & $a_{3}$ & $a_{3}$ & $a_{3}$ & $a_{3}$ & $a_{2}$ & $a_{2}$ & $a_{2}$ & $a_{2}$ \\ \hline
$b_{1}$ & $b_{3}$ & $b_{3}$ & $b_{3}$ & $b_{3}$ & $b_{1}$ & $b_{1}$ & $b_{1}$ & $b_{1}$ & $b_{2}$ & $b_{2}$ & $b_{2}$ & $b_{2}$ \\ \hline
$b_{2}$ & $b_{4}$ & $b_{4}$ & $b_{4}$ & $b_{4}$ & $b_{2}$ & $b_{2}$ & $b_{2}$ & $b_{2}$ & $b_{1}$ & $b_{1}$ & $b_{1}$ & $b_{1}$ \\ \hline
$b_{3}$ & $b_{1}$ & $b_{1}$ & $b_{1}$ & $b_{1}$ & $b_{3}$ & $b_{3}$ & $b_{3}$ & $b_{3}$ & $b_{4}$ & $b_{4}$ & $b_{4}$ & $b_{4}$ \\ \hline
$b_{4}$ & $b_{2}$ & $b_{2}$ & $b_{2}$ & $b_{2}$ & $b_{4}$ & $b_{4}$ & $b_{4}$ & $b_{4}$ & $b_{3}$ & $b_{3}$ & $b_{3}$ & $b_{3}$ \\ \hline
$c_{1}$ & $c_{2}$ & $c_{2}$ & $c_{2}$ & $c_{2}$ & $c_{3}$ & $c_{3}$ & $c_{3}$ & $c_{3}$ & $c_{1}$ & $c_{1}$ & $c_{1}$ & $c_{1}$ \\ \hline
$c_{2}$ & $c_{1}$ & $c_{1}$ & $c_{1}$ & $c_{1}$ & $c_{4}$ & $c_{4}$ & $c_{4}$ & $c_{4}$ & $c_{2}$ & $c_{2}$ & $c_{2}$ & $c_{2}$ \\ \hline
$c_{3}$ & $c_{4}$ & $c_{4}$ & $c_{4}$ & $c_{4}$ & $c_{1}$ & $c_{1}$ & $c_{1}$ & $c_{1}$ & $c_{3}$ & $c_{3}$ & $c_{3}$ & $c_{3}$ \\ \hline
$c_{4}$ & $c_{3}$ & $c_{3}$ & $c_{3}$ & $c_{3}$ & $c_{2}$ & $c_{2}$ & $c_{2}$ & $c_{2}$ & $c_{4}$ & $c_{4}$ & $c_{4}$ & $c_{4}$ \\ \hline
\end{tabular}
\bigskip
\caption{}
\label{tbl:binary_operation}
\end{table}
\begin{table}[htbp]
\begin{tabular}{|c||c|c|c|c|c|c|c|c|c|c|c|c|} \hline
$\theta$ & $a_{1}$ & $a_{2}$ & $a_{3}$ & $a_{4}$ & $b_{1}$ & $b_{2}$ & $b_{3}$ & $b_{4}$ & $c_{1}$ & $c_{2}$ & $c_{3}$ & $c_{4}$ \\ \hline \hline
$a_{1}$ & $0$ & $0$ & $0$ & $0$ & $1$ & $0$ & $1$ & $0$ & $1$ & $1$ & $0$ & $0$ \\ \hline
$a_{2}$ & $0$ & $0$ & $0$ & $0$ & $0$ & $1$ & $0$ & $1$ & $0$ & $0$ & $1$ & $1$ \\ \hline
$a_{3}$ & $0$ & $0$ & $0$ & $0$ & $1$ & $0$ & $1$ & $0$ & $1$ & $1$ & $0$ & $0$ \\ \hline
$a_{4}$ & $0$ & $0$ & $0$ & $0$ & $0$ & $1$ & $0$ & $1$ & $0$ & $0$ & $1$ & $1$ \\ \hline
$b_{1}$ & $1$ & $1$ & $0$ & $0$ & $0$ & $0$ & $0$ & $0$ & $1$ & $0$ & $1$ & $0$ \\ \hline
$b_{2}$ & $0$ & $0$ & $1$ & $1$ & $0$ & $0$ & $0$ & $0$ & $0$ & $1$ & $0$ & $1$ \\ \hline
$b_{3}$ & $1$ & $1$ & $0$ & $0$ & $0$ & $0$ & $0$ & $0$ & $1$ & $0$ & $1$ & $0$ \\ \hline
$b_{4}$ & $0$ & $0$ & $1$ & $1$ & $0$ & $0$ & $0$ & $0$ & $0$ & $1$ & $0$ & $1$ \\ \hline
$c_{1}$ & $1$ & $0$ & $1$ & $0$ & $1$ & $1$ & $0$ & $0$ & $0$ & $0$ & $0$ & $0$ \\ \hline
$c_{2}$ & $0$ & $1$ & $0$ & $1$ & $0$ & $0$ & $1$ & $1$ & $0$ & $0$ & $0$ & $0$ \\ \hline
$c_{3}$ & $1$ & $0$ & $1$ & $0$ & $1$ & $1$ & $0$ & $0$ & $0$ & $0$ & $0$ & $0$ \\ \hline
$c_{4}$ & $0$ & $1$ & $0$ & $1$ & $0$ & $0$ & $1$ & $1$ & $0$ & $0$ & $0$ & $0$ \\ \hline
\end{tabular}
\bigskip
\caption{}
\label{tbl:2-cocycle}
\end{table}

Suppose $D_{T}$ and $D_{B}$ are diagrams of the trivial 3-component link $L_{T}$ and the Borromean rings $L_{B}$ depicted in Figure \ref{fig:example} respectively.
Since $D_{T}$ has no crossing, for each arc coloring $\mathscr{A}_{T}$ of $D_{T}$ by $X$ and index $i$, $W(\mathscr{A}_{T}, \theta; i)$ is always equal to 0.
On the other hand, we have an arc coloring $\mathscr{A}_{B}$ of $D_{B}$ by $X$, depicted in Figure \ref{fig:example}, satisfying $W(\mathscr{A}_{B}, \theta; i) = 1$ for each index $i$.
It says that $\Phi(L_{T}, \theta; i) \neq \Phi(L_{B}, \theta; i)$ for each index $i$.
Thus, by Theorem \ref{thm:main}, the Borromean rings $L_{B}$ is not link-homotopic to the trivial 3-component link $L_{T}$.
\begin{figure}
\begin{center}
\includegraphics[scale=0.25]{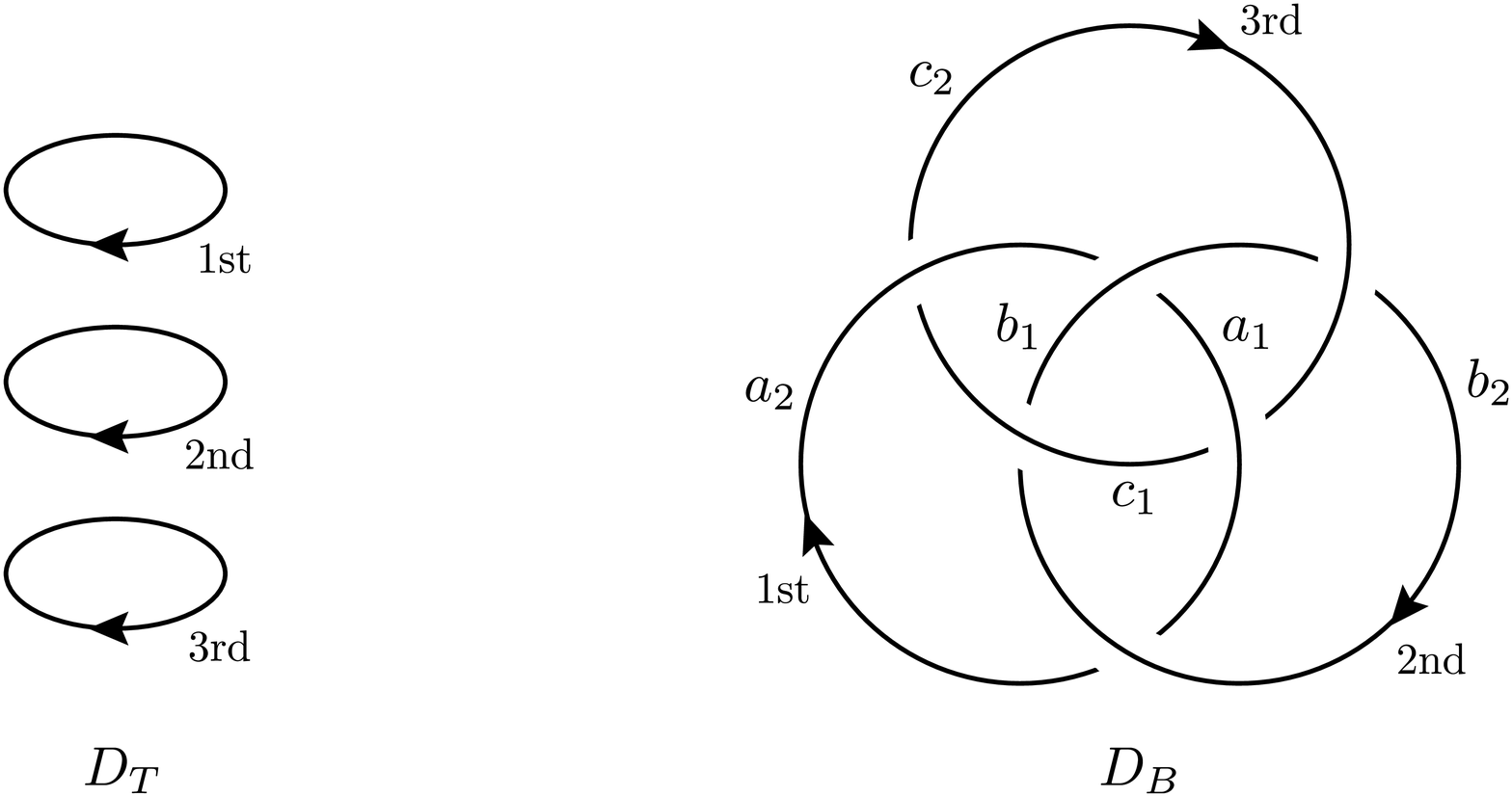}
\end{center}
\caption{}
\label{fig:example}
\end{figure}

\begin{remark}
Since the number of all arc colorings of $D_{B}$ by $X$ is equal to the number for $D_{T}$, we essentially have to compute the quandle cocycle invariants to distinguish $L_{B}$ and $L_{T}$ up to link-homotopy.
\end{remark}

\section*{Appendix. Decomposition of quandle cocycle invariant}
\label{app:decomposition_of_quandle_cocycle_invariant}

In this appendix, we see that each quandle cocycle invariant can be decomposed into multisets which are still invariant under ambient isotopy or link-homotopy.
It seems that these multisets are more useful than a quandle cocycle invariant itself to classify links.

We first note that the inner automorphism group of a quandle naturally acts on the quandle.
Therefore each quandle is decomposed into the orbits of the inner automorphism group.

Let $X$ be a quandle and $\cup_{\lambda \in \Lambda} X_{\lambda}$ the orbit decomposition of $X$.
Remark that each arc coloring of a link diagram maps arcs belonging to the same component in the same orbit.
We thus call an arc coloring of a diagram of an $n$-component link by $X$ to be in $(X_{\lambda_{1}}, X_{\lambda_{2}}, \dots, X_{\lambda_{n}})$ if it maps arcs belonging to the $i$-th component in $X_{\lambda_{i}}$.
Obviously, the number of all arc colorings in $(X_{\lambda_{1}}, X_{\lambda_{2}}, \dots, X_{\lambda_{n}})$ is invariant under ambient isotopy for each $n$-tuple $(\lambda_{1}, \lambda_{2}, \dots, \lambda_{n}) \in \Lambda^{n}$.

Let $\theta \in \Hom{C^{Q}_{2}(X), A}$ be a 2-cocycle with coefficients in an abelian group $A$.
For an $n$-component link $L$ and $n$-tuple $(\lambda_{1}, \lambda_{2}, \dots, \lambda_{n}) \in \Lambda^{n}$, consider the multiset $\Phi(L, \theta; X_{\lambda_{1}}, X_{\lambda_{2}}, \dots, X_{\lambda_{n}}; i)$ consisting of $i$-th weights of all arc colorings of a diagram of $L$ in $(X_{\lambda_{1}}, X_{\lambda_{2}}, \dots, X_{\lambda_{n}})$.
Then obviously
\[
\Phi(L, \theta; i) = \bigcup_{(\lambda_{1}, \lambda_{2}, \dots, \lambda_{n}) \in \Lambda^{n}} \Phi(L, \theta; X_{\lambda_{1}}, X_{\lambda_{2}}, \dots, X_{\lambda_{n}}; i)
\]
and each $\Phi(L, \theta; X_{\lambda_{1}}, X_{\lambda_{2}}, \dots, X_{\lambda_{n}}; i)$ is invariant under ambient isotopy.

Of course, the above arguments hold even if $X$ is a quasi-trivial quandle and $\theta \in \Hom{C^{Q, qt}_{2}(X), A}$ a 2-cocycle.
In this case, the number of all arc colorings in $(X_{\lambda_{1}}, X_{\lambda_{2}}, \dots, X_{\lambda_{n}})$ and the multiset $\Phi(L, \theta; X_{\lambda_{1}}, X_{\lambda_{2}}, \dots, X_{\lambda_{n}}; i)$ are invariant under link-homotopy.

\bibliographystyle{amsplain}

\end{document}